\newtheorem{theorem}{Theorem}[section]
\newtheorem{lemma}[theorem]{Lemma}
\newtheorem{proposition}[theorem]{Proposition}
\newtheorem{corollary}[theorem]{Corollary}
\theoremstyle{definition}
\theoremstyle{remark}
\newtheorem*{remark*}{Remark}
\newcommand{\BB}{\mathscr{B}}
\newcommand{\MM}{{\mathfrak{M}}}
\newcommand{\NN}{{\mathfrak{N}}}
\newcommand{\TT}{{\mathfrak{T}}}
\title{Uniform Continuity in Distribution for Borel Transformations of Random Fields}
\author{Alexander~I.~Bufetov\thanks{Steklov Mathematical Institute of the Russian Academy of Sciences.\\
		This work was performed at the Steklov International Mathematical Center and supported by the Ministry of
		Science and Higher Education of the Russian Federation (agreement no.~075-15-2025-303).}}
\date{}
\begin{document}
	
	\maketitle
	
	\begin{flushright}
		{\it{Моему дорогому научному руководителю\\
		Якову Григорьевичу Синаю\\
		с любовью и благодарностью.}}
	\end{flushright}
	
	\begin{abstract}
		Simple sufficient conditions are given that ensure the uniform continuity in distribution for Borel transformations of random fields.
	\end{abstract}
	
	\section{Introduction and the statement of the main result}
	
	Let $W$ be a complete separable metric space, let $\MM_1(W)$ be the space  of
	Borel probability measures on $W$ endowed with the weak topology and let $\NN$ be a compact subset of the space $\MM_1(W)$. Assume that $V \subset W$ is a Borel
	subset satisfying $\eta(V) = 1$ for any measure $\eta \in \NN$.
	
	A Borel mapping
	\begin{equation}
		g \colon \NN \times V \to W
		\label{eq:g}
	\end{equation}
	induces a Borel mapping
	\begin{equation}
		g_* \colon \NN \to \MM_1(W)
		\label{eq:pushforward-def}
	\end{equation}
	by the formula
	\begin{equation}
		g_* \eta = (g(\eta, \cdot))_* \eta.
		\label{eq:pushforward}
	\end{equation}
	The map $g_*$ is equivalently given by the formula
	\begin{equation}
		\int_W \varphi \, d g_* \eta = \int_W \varphi(g(\eta, w)) \, d\eta(w)
		\label{eq:phi}
	\end{equation}
	that holds for any bounded continuous function $\varphi \colon W \to \mathbb{C}$.
	
	The aim of this note is to give convenient sufficient conditions ensuring that the correspondence
	$g \mapsto g_*$ induces a uniformly continuous map. Of particular interest to us is the case where the space $W$
	is the space of realizations of a random field; under additional assumptions on the
	regularity of the random field, the space $W$ is a complete separable metric space (see Section~\ref{sec:4} below).
	
	The space $\MM_1(W)$ can be turned into a metric space in several different natural ways. For our purposes  the L\'evy--Prokhorov metric is particularly convenient, and we recall that the weak topology on $\MM_1(W)$ is induced by the L\'evy--Prokhorov
	metric $d_{\mathrm{LP}}$, which is uniquely defined by the following requirement (cf. Bogachev \cite{Bogachev}, Billingsley \cite{Billingsley}): the
	L\'evy--Prokhorov distance between measures $\eta_1, \eta_2 \in \MM_1(W)$ does not exceed $\varepsilon$ if
	and only if for every closed set $K \subset W$ and its $\varepsilon$-neighbourhood
	\[
	K_\varepsilon = \{ w \in W : d(w, K) \le \varepsilon \}
	\]
	we have
	\[
	\eta_1(K) \le \eta_2(K_\varepsilon) + \varepsilon, \qquad
	\eta_2(K) \le \eta_1(K_\varepsilon) + \varepsilon.
	\]
	
	We endow the space of Borel mappings \eqref{eq:g} with the metric of uniform convergence in probability, and
	the space of Borel mappings from $\NN$ to $\widetilde{\NN}$ with the  Tchebycheff uniform metric. We fix a compact set $\widetilde{\NN} \subset \MM_1(W)$ and only consider maps $g$ of the form \eqref{eq:g} satisfying the additional requirement
	$g_*(\NN) \subset \widetilde{\NN}$. The correspondence $g \mapsto g_*$ is then uniformly continuous with respect to our metrics. We 
	proceed to the precise formulations.
	
	We fix a metric $d$  on the space $W$, and we  write $d_{\mathrm{LP}}$ for the corresponding L\'evy--Prokhorov metric on the space $\MM_1(W)$. For a compact subset $\NN \subset \MM_1(W)$ and a Borel subset $V \subset W$ satisfying
	$\eta(V) = 1$ for any $\eta \in \NN$, introduce the space $\BB(\NN \times V, W)$ of Borel maps $$g \colon \NN \times V \to W.$$
	We endow the space $\BB(\NN \times V, W)$ with the topology $\TT_{\mathrm{prob}}$ generated by the neighbourhoods
	\[
	U(g_0, \varepsilon, \delta) = \left\{
	g \in \BB(\NN \times V, W) \,:\, \eta(\{w \in W : d(g(\eta, w), g_0(\eta, w)) > \varepsilon\}) < \delta \text{ for all } \eta \in \NN
	\right\}
	\]
	for all $\varepsilon > 0$ and $\delta > 0$. The collection of the sets $U(g_0, \varepsilon, \delta)$ over all $\varepsilon>0, \delta >0$ forms a subbasis for the topology $\TT_{\mathrm{prob}}$ . The topology  $\TT_{\mathrm{prob}}$ on the space $\BB(\NN \times V, W)$is metrizable: one directly verifies
	that the distance
	\[
	d(g_1, g_2) =
	\sum_{l=1}^{\infty}
	2^{-l}
	\frac{\displaystyle\sup_{\eta \in \NN}
		\eta(\{w \in W : d(g_1(\eta, w), g_2(\eta, w)) > 2^{-l}\})}
	{1 + \displaystyle\sup_{\eta \in \NN}
		\eta(\{w \in W : d(g_1(\eta, w), g_2(\eta, w)) > 2^{-l}\})}
	\]
	induces the topology $\TT_{\mathrm{prob}}$ on $\BB(\NN \times V, W)$.
	
	Now let $\BB(\NN, \MM_1(W))$ stand for the space of all Borel maps $h \colon \NN \to \MM_1(W)$. We endow the space $\BB(\NN, \MM_1(W))$ with the
	Tchebycheff uniform metric with respect to the L\'evy--Prokhorov metric on the space $\MM_1(W)$. For a compact set
	$\widetilde{\NN} \subset \MM_1(W)$ let $\BB(\NN, \widetilde{\NN})$ be the subspace of Borel maps
	$h \colon \NN \to \MM_1(W)$ satisfying the inclusion $h(\NN) \subset \widetilde{\NN}$. We note that the subspace $\BB(\NN, \widetilde{\NN})$
	is closed in the Tchebycheff uniform topology.
	
	Let $\widetilde{\NN} \subset \MM_1(W)$ be a compact set. Let $\BB_{\widetilde{\NN}}(\NN \times V, W)\subset \BB(\NN \times V, W)$ be the
	subspace of maps $g$ such that the map $g_*$ defined by \eqref{eq:pushforward}
	satisfies the inclusion $g_*(\NN) \subset \widetilde{\NN}$. The subspace $\BB_{\widetilde{\NN}}(\NN \times V, W) \subset \BB(\NN \times V, W)$
	is closed by definition.
	The main result of this paper is
	
	\begin{theorem}
		\label{thm:main}
		The correspondence $g \mapsto g_*$ induces a uniformly continuous map from the space
		$\BB_{\widetilde{\NN}}(\NN \times V, W)$ to the space $\BB(\NN, \widetilde{\NN})$.
	\end{theorem}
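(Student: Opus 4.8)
The plan is to reduce the asserted uniform continuity to a single pointwise-in-$\eta$ estimate and then to unwind the explicit formula for the metric on $\BB_{\widetilde{\NN}}(\NN \times V, W)$. Since the Tchebycheff uniform distance between $g_{1*}$ and $g_{2*}$ is $\sup_{\eta \in \NN} d_{\mathrm{LP}}(g_{1*}\eta, g_{2*}\eta)$, it suffices to control $d_{\mathrm{LP}}(g_{1*}\eta, g_{2*}\eta)$ uniformly in $\eta$ in terms of how close $g_1(\eta, \cdot)$ and $g_2(\eta, \cdot)$ are in $\eta$-probability. The heart of the argument is the following elementary claim: if, for a fixed $\eta \in \NN$, one has $\eta(\{w \in W : d(g_1(\eta, w), g_2(\eta, w)) > \varepsilon\}) \le \varepsilon$, then $d_{\mathrm{LP}}(g_{1*}\eta, g_{2*}\eta) \le \varepsilon$.

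To establish the claim I would work directly from the closed-set characterization of $d_{\mathrm{LP}}$ recalled above. Fix a closed set $K \subset W$ and put $A = \{w \in V : d(g_1(\eta, w), g_2(\eta, w)) \le \varepsilon\}$, so that $\eta(A) \ge 1 - \varepsilon$. For $w \in A$ with $g_1(\eta, w) \in K$ one has $d(g_2(\eta, w), K) \le \varepsilon$, hence $g_2(\eta, w) \in K_\varepsilon$; therefore $\{w : g_1(\eta, w) \in K\} \cap A \subseteq \{w : g_2(\eta, w) \in K_\varepsilon\}$. Splitting on $A$ and its complement gives $g_{1*}\eta(K) \le g_{2*}\eta(K_\varepsilon) + \eta(A^c) \le g_{2*}\eta(K_\varepsilon) + \varepsilon$, and the symmetric inequality follows by exchanging the roles of $g_1$ and $g_2$. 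By the defining property of $d_{\mathrm{LP}}$, this yields $d_{\mathrm{LP}}(g_{1*}\eta, g_{2*}\eta) \le \varepsilon$.

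It then remains to convert smallness of the distance $d(g_1, g_2)$ into the hypothesis of the claim, uniformly in $\eta$. Given a target $\varepsilon \in (0, 1]$ (larger $\varepsilon$ being trivial, as $d_{\mathrm{LP}} \le 1$), I would fix $l$ with $2^{-l} \le \varepsilon$ and set $s = \sup_{\eta \in \NN} \eta(\{w : d(g_1(\eta, w), g_2(\eta, w)) > 2^{-l}\})$. Retaining only the $l$-th summand of the series defining $d(g_1, g_2)$ shows $2^{-l}\frac{s}{1+s} \le d(g_1, g_2)$; consequently, choosing $\delta = 2^{-l}\frac{\varepsilon}{1+\varepsilon}$ and assuming $d(g_1, g_2) < \delta$ forces $\frac{s}{1+s} < \frac{\varepsilon}{1+\varepsilon}$, i.e. $s < \varepsilon$ by monotonicity of $x \mapsto x/(1+x)$. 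Since $\{w : d(g_1(\eta,w), g_2(\eta,w)) > \varepsilon\} \subseteq \{w : d(g_1(\eta,w), g_2(\eta,w)) > 2^{-l}\}$, this gives $\eta(\{w : d(g_1(\eta, w), g_2(\eta, w)) > \varepsilon\}) \le s < \varepsilon$ for every $\eta \in \NN$, so the claim applies uniformly and yields $\sup_{\eta \in \NN} d_{\mathrm{LP}}(g_{1*}\eta, g_{2*}\eta) \le \varepsilon$. As $\delta$ depends on $\varepsilon$ alone, this is precisely uniform continuity.

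I expect the only genuinely substantive step to be the pointwise claim, which is in essence a coupling-along-the-diagonal estimate; the remaining work, namely the dyadic unwinding of the explicit metric, is routine bookkeeping. It is worth noting that the argument appears to use neither compactness of $\NN$ nor of $\widetilde{\NN}$, these hypotheses serving only to guarantee that the relevant metrics and the pushforward map $g_* \in \BB(\NN, \widetilde{\NN})$ are well defined.
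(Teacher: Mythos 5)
Your proof is correct, but it takes a genuinely different and more elementary route than the paper's. The paper first reduces to $W=\mathbb{C}^{\mathbb{N}}$ via a bi-Lipschitz embedding built from a countable family of Lipschitz functions (Propositions~2.1 and~2.2), and then deduces uniform continuity of $g\mapsto g_*$ from uniform convergence of all finite-dimensional marginals; the key input there is Lemma~3.2, a tightness statement obtained from the Prokhorov Theorem which recovers closeness in $d_{\mathrm{LP}}$ on $\MM_1(\mathbb{C}^{\mathbb{N}})$ from closeness of a single finite-dimensional marginal, uniformly over the compact set $\widetilde{\NN}$. Your argument instead works directly in $W$ and rests on the classical domination of the L\'evy--Prokhorov distance by the Ky Fan distance: if $g_1(\eta,\cdot)$ and $g_2(\eta,\cdot)$ agree to within $\varepsilon$ off a set of $\eta$-measure at most $\varepsilon$, then $d_{\mathrm{LP}}(g_{1*}\eta,g_{2*}\eta)\le\varepsilon$. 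Your verification of this claim via the closed-set characterization of $d_{\mathrm{LP}}$ is sound (note that $K_\varepsilon$ is closed, so all preimages involved are Borel, and the paper's characterization of ``$d_{\mathrm{LP}}\le\varepsilon$'' is exactly what you need), and the dyadic bookkeeping converting $d(g_1,g_2)<\delta$ into the hypothesis of the claim, uniformly in $\eta$, is routine and correct. What your approach buys is an explicit modulus of continuity ($\delta=2^{-l}\varepsilon/(1+\varepsilon)$ with $2^{-l}\le\varepsilon$) and, as you rightly observe, independence from the compactness of $\NN$ and $\widetilde{\NN}$, which the paper's Lemma~3.2 genuinely uses. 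What the paper's route buys is the finite-dimensional-marginal machinery (Corollary~3.3, Proposition~2.5) that is reused in Section~4 for the separability and projection statements, so the two arguments are complementary rather than interchangeable in the context of the whole note.
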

	
	\section{Reduction to the case $W = \mathbb{C}^{\mathbb{N}}$}
	
	Endow the space
	\[
	\mathbb{C}^{\mathbb{N}} = \{ t = (t_1, t_2, \ldots) : t_n \in \mathbb{C},\, n \in \mathbb{N} \}
	\]
	with the distance
	\[
	d(t^{(1)}, t^{(2)}) =
	\sum_{l=1}^{\infty}
	2^{-l}
	\frac{|t^{(1)}_l - t^{(2)}_l|}{1 + |t^{(1)}_l - t^{(2)}_l|}.
	\]
	
	The Stone--Weierstrass Theorem directly implies the following two propositions.
	
	\begin{proposition}
		\label{prop:2.1}
		Let $W$ be a complete separable metric space and let $K$ be a compact subset of $W$.
		There exists a countable family of $1$-Lipschitz functions $\Phi = (\varphi_n)_{n \in \mathbb{N}}$ on $K$
		such that the correspondence
		\[
		w \mapsto (\varphi_n(w))_{n \in \mathbb{N}}
		\]
		defines a bi-Lipschitz homeomorphism of $K$ onto its image.
	\end{proposition}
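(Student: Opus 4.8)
The plan is to realize the embedding through distance functions to a countable dense set. Since $K$ is compact it is separable; I would fix a dense sequence $(w_m)_{m\in\bbN}$ in $K$ and set $\varphi_m(w)=d(w,w_m)$. Each $\varphi_m$ is $1$-Lipschitz by the triangle inequality and takes values in $\bbR\subset\bbC$, and the family $(\varphi_m)$ separates points: if $w\neq w'$, choosing $w_m$ with $d(w,w_m)<\tfrac12 d(w,w')$ forces $\varphi_m(w)<\tfrac12 d(w,w')<\varphi_m(w')$. This separation is exactly the input one extracts from the Stone--Weierstrass circle of ideas; for bare injectivity the density argument already suffices. I would therefore take $\Phi=(\varphi_m)_{m\in\bbN}$ and argue that it is the desired map.

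The Lipschitz (upper) half is immediate. Writing $a_l=|\varphi_l(w)-\varphi_l(w')|\le d(w,w')$ and using $x/(1+x)\le x$ together with $\sum_l 2^{-l}=1$, one gets $d(\Phi(w),\Phi(w'))\le\sum_l 2^{-l}a_l\le d(w,w')$, so $\Phi$ is $1$-Lipschitz, in particular continuous. Combined with the injectivity just checked, $\Phi$ is a continuous bijection of the compact set $K$ onto $\Phi(K)$; hence it is a homeomorphism and its inverse is automatically uniformly continuous. This already gives the topological content and the modulus-of-continuity control on $\Phi^{-1}$ that the reduction carried out below ultimately relies on.

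The hard part will be the reverse (lower) Lipschitz bound $d(\Phi(w),\Phi(w'))\ge c\,d(w,w')$. Here the geometry of the metric on $\bbC^\bbN$ is quite restrictive: because every $\varphi_l$ is $1$-Lipschitz we have $a_l\le d(w,w')$, so for every $N$ the tail obeys $\sum_{l>N}2^{-l}\,a_l/(1+a_l)\le 2^{-N}\,d(w,w')$. Consequently a lower bound proportional to $d(w,w')$ can be sustained only by the first $N$ coordinates with $N$ chosen so that $2^{-N}<c$; equivalently, one must produce, for any pair at distance $\rho$, a separating coordinate $\varphi_l$ whose index $l$ is bounded independently of $\rho$. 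This is precisely a bi-Lipschitz embedding of $K$ into the finite-dimensional space $(\bbR^N,\ell^\infty)$, and it is the step where the total boundedness of $K$ must be exploited quantitatively. I would attempt it by enumerating $(w_m)$ through a nested family of finite $2^{-k}$-nets and tracking, scale by scale, the index of the coordinate that resolves a given pair; the genuinely delicate point, and the crux of the whole statement, is to keep this index uniformly bounded across all scales, which is exactly what a finite-dimensional bi-Lipschitz embedding demands and where any further metric regularity of $K$ enters the argument.
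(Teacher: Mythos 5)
The paper gives no argument for Proposition~\ref{prop:2.1} beyond the one-line remark that it ``directly'' follows from Stone--Weierstrass, so there is no detailed proof to compare yours against step by step. Your Kuratowski-type construction $\varphi_m(w)=d(w,w_m)$ for a dense sequence $(w_m)$ in $K$ is the natural way to realize the statement, and the parts you actually complete are correct: each $\varphi_m$ is $1$-Lipschitz, the family separates points, $\Phi$ is $1$-Lipschitz into $(\mathbb{C}^{\mathbb{N}},d)$, and by compactness of $K$ a continuous injection is a homeomorphism onto its image with uniformly continuous inverse.

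The step you leave open is, however, a genuine gap, and your own tail estimate shows it cannot be closed in the form you are attempting. If $\Phi$ were bi-Lipschitz into the weighted product metric $\sum_l 2^{-l}|t_l-t_l'|/(1+|t_l-t_l'|)$ with lower constant $c$, then, exactly as you compute, the truncation to the first $N$ coordinates with $2^{-N}\le c/2$ would be a bi-Lipschitz embedding of $K$ into $(\mathbb{R}^N,\ell^{\infty})$; no choice of $1$-Lipschitz coordinates can achieve this for a general compact $K$. Indeed, take $K$ to be a null sequence of equilateral clusters, the $n$-th cluster consisting of $n$ points at mutual distance $\varepsilon_n$, accumulating at a basepoint: a volume-packing count shows that an $n$-point equilateral set embeds into $\ell^{\infty}_N$ with distortion $R$ only if $n\le (2R+1)^N$, so no finite $N$ serves all clusters, and the scale-by-scale net bookkeeping you propose in the last paragraph cannot succeed without extra hypotheses (finite Assouad dimension) that are not assumed. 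The conclusion should therefore be read differently: either put the uniform metric $\sup_m|\cdot|$ on the image, in which case your $\Phi$ is in fact an \emph{isometry}, since density of $(w_m)$ gives $\sup_m|d(w,w_m)-d(w',w_m)|=d(w,w')$; or note that what the reduction actually needs downstream (via Proposition~\ref{prop:2.2} and compactness of $\NN$) is only that $\Phi$, and hence $\Phi_*$, is a homeomorphism onto its image --- and that much your argument already establishes. State explicitly which of these weaker conclusions you are proving and why it suffices, instead of pursuing the lower Lipschitz bound in the product metric.
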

	
	\begin{proposition}
		\label{prop:2.2}
		Let $V$ and $W$ be  complete separable metric spaces. Let $V' \subset V$ and $\NN \subset \MM_1(V)$
		be Borel subsets such that $\eta(V') = 1$ for any $\eta \in \NN$. Let $\tau \colon V' \to W$ be a Borel map.
		Assume that for any $\varepsilon > 0$ there exist $R \ge 1$ and a Borel subset $V(\varepsilon) \subset V'$ such that
		\[
		\sup_{\eta \in \NN} \eta(V' \setminus V(\varepsilon)) < \varepsilon,
		\]
		and for any $x, y \in V(\varepsilon), x \ne y$ we have
		\[
		R^{-1} \le \frac{d_W(\tau x, \tau y)}{d_V(x, y)} \le R.
		\]
		Then the induced map $\tau_* \colon \NN \to \MM_1(W)$ is a homeomorphism onto its image.
	\end{proposition}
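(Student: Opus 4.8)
The plan is to establish two complementary quantitative estimates between the Lévy--Prokhorov distances $d_{\mathrm{LP}}$ on $\MM_1(V)$ and on $\MM_1(W)$: a forward one showing that $\tau_*$ is uniformly continuous, and a reverse one showing simultaneously that $\tau_*$ is injective and that $\tau_*^{-1}$ is uniformly continuous on the image $\tau_*(\NN)$. Both estimates will have the same shape. Fixing $\varepsilon > 0$ together with the associated $R \ge 1$ and Borel set $V(\varepsilon)$ furnished by the hypothesis, and writing $\delta = d_{\mathrm{LP}}(\eta_1, \eta_2)$, I aim to prove
\[
d_{\mathrm{LP}}(\tau_*\eta_1, \tau_*\eta_2) \le \max(R\,\delta,\ \delta + 2\varepsilon)
\]
together with the symmetric inequality obtained by interchanging the roles of $V$ and $W$ (and replacing $\delta$ by $\rho = d_{\mathrm{LP}}(\tau_*\eta_1,\tau_*\eta_2)$). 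Since $\varepsilon$ is arbitrary, letting first the metric argument tend to $0$ and then $\varepsilon \to 0$ yields uniform continuity of $\tau_*$; the reverse estimate with $\rho = 0$ yields injectivity, and for small positive $\rho$ yields uniform continuity of $\tau_*^{-1}$. Together these give that $\tau_*$ is a (uniform) homeomorphism onto its image.

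For the forward estimate I would fix a closed set $K \subset W$ and start from $\tau_*\eta_1(K) = \eta_1(\tau^{-1}K)$. Since $\tau^{-1}K \subset V'$ and $\eta_1(V' \setminus V(\varepsilon)) < \varepsilon$, I discard the part outside $V(\varepsilon)$, set $B = \tau^{-1}K \cap V(\varepsilon)$, and pass to the closure $\overline B$ in $V$, so that the defining inequality of $d_{\mathrm{LP}}$ for $\eta_1,\eta_2$ applies: $\eta_1(\overline B) \le \eta_2\big((\overline B)_\delta\big) + \delta$. The crucial step is the inclusion $(\overline B)_\delta \cap V(\varepsilon) \subset \tau^{-1}(K_{R\delta})$. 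This is where $\tau$ being merely Borel---and $V(\varepsilon)$ merely Borel---makes the argument delicate: given $y$ in the left-hand set I choose $x \in \overline B$ with $d_V(x,y) \le \delta$ and approximate $x$ by a sequence $b_n \in B \subset V(\varepsilon)$; as $y,b_n \in V(\varepsilon)$ the Lipschitz bound $d_W(\tau y, \tau b_n) \le R\,d_V(y,b_n)$ holds, and since $\tau b_n \in K$ with $d_V(y,b_n) \to d_V(y,x) \le \delta$, this forces $d_W(\tau y, K) \le R\delta$. Discarding once more the complement of $V(\varepsilon)$ and substituting back gives $\tau_*\eta_1(K) \le \tau_*\eta_2(K_{R\delta}) + \delta + 2\varepsilon$; the symmetric bound then yields the forward estimate.

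The reverse estimate runs along the same lines with $\tau$ and its inverse interchanged, now exploiting the lower bound $d_W(\tau x, \tau y) \ge R^{-1}d_V(x,y)$, which makes $\tau$ injective on $V(\varepsilon)$ with an $R$-Lipschitz inverse on $\tau(V(\varepsilon))$. Fixing a closed $C \subset V$, I set $D = C \cap V(\varepsilon)$, use $D \subset \tau^{-1}\big(\overline{\tau(D)}\big)$ to transfer the mass to $W$, apply the Lévy--Prokhorov inequality for $\tau_*\eta_1,\tau_*\eta_2$ against the closed set $\overline{\tau(D)}$, and establish the analogous inclusion $\tau^{-1}\big((\overline{\tau(D)})_\rho\big) \cap V(\varepsilon) \subset C_{R\rho}$ by the same closure-plus-approximating-sequence device. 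Passing to the closure $\overline{\tau(D)}$ rather than to $\tau(D)$ itself also sidesteps the possible non-closedness (and merely analytic nature) of the image $\tau(D)$, keeping every set to which the measures are applied Borel. I expect the approximating-sequence argument that transfers the (co-)Lipschitz inequality across the closure---without any global continuity of $\tau$ and with $V(\varepsilon)$ only Borel---to be the main obstacle; once it is secured, collecting the errors and letting $\varepsilon \to 0$ is routine.
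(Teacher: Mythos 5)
Your argument is correct, and it is necessarily a different route from the paper's, because the paper offers no proof of Proposition~\ref{prop:2.2} at all: it merely asserts that the statement ``directly'' follows from the Stone--Weierstrass theorem, presumably meaning an argument through test functions and the characterization of weak convergence via finite families of bounded Lipschitz functions. What you prove is in fact stronger and more quantitative: the two-sided Strassen-type bound $d_{\mathrm{LP}}(\tau_*\eta_1,\tau_*\eta_2)\le\max(R\delta,\delta+2\varepsilon)$ and its reverse give uniform continuity of both $\tau_*$ and $\tau_*^{-1}$ on $\tau_*(\NN)$, which is exactly what is needed since $\NN$ is only assumed Borel (not compact), so injectivity plus continuity alone would not suffice for a homeomorphism onto the image. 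Your handling of the measurability issues is also sound: every set to which a measure is applied ($\overline{B}$, $(\overline{B})_\delta$, $\tau^{-1}(K_{R\delta})$, $\overline{\tau(D)}$, $C_{R\rho}$) is closed or a Borel preimage of a closed set, and passing to $\overline{\tau(D)}$ correctly sidesteps the analyticity of $\tau(D)$. One phrase should be repaired, though it is cosmetic: for $y\in(\overline{B})_\delta$ you ``choose $x\in\overline{B}$ with $d_V(x,y)\le\delta$,'' but in a non-compact closed set the infimum $d_V(y,\overline{B})\le\delta$ need not be attained. The fix is already implicit in your own next step: take $b_n\in B$ with $d_V(y,b_n)\to d_V(y,B)=d_V(y,\overline{B})\le\delta$ directly, apply the Lipschitz bound to the pairs $(y,b_n)\in V(\varepsilon)\times V(\varepsilon)$, and pass to the limit to get $d_W(\tau y,K)\le R\delta$; the same remark applies to the point $w\in\overline{\tau(D)}$ in the reverse estimate. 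With that wording adjusted, the proof is complete.
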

	
	Propositions~\ref{prop:2.1} and~\ref{prop:2.2} imply that it suffices to establish Theorem~\ref{thm:main}
	for $W = \mathbb{C}^{\mathbb{N}}$.
	
	We now assume that $\NN$ and $\widetilde{\NN}$ are compact subsets of the space $\MM_1(\mathbb{C}^{\mathbb{N}})$
	of Borel probability measures on the space
	\[
	\mathbb{C}^{\mathbb{N}} = \{ t = (t_1, t_2, \ldots) : t_n \in \mathbb{C},\, n \in \mathbb{N} \}.
	\]
	
	
	For a Borel subset $V \subset \mathbb{C}^{\mathbb{N}}$ satisfying $\eta(V) = 1$ for every $\eta \in \NN$,
	denote by $\BB(\NN \times V, \mathbb{C}^{\mathbb{N}})$ the corresponding space of Borel maps
	\[
	g \colon \NN \times V \to \mathbb{C}^{\mathbb{N}}, \qquad
	g(\eta, t) = (g^{(n)}(\eta, t))_{n \in \mathbb{N}},
	\]
	where $g^{(n)}(\eta, t) \in \mathbb{C}$ for $n \in \mathbb{N}$.
	
	The topology on $\BB(\NN \times V, \mathbb{C}^{\mathbb{N}})$ is induced by the distance function
	\[
	d(g, \widehat{g}) =
	\sum_{l,k=1}^{\infty}
	2^{-l-k}
	\frac{\displaystyle\sup_{\eta \in \NN}
		\eta(\{t \in V : |g^{(k)}(\eta, t) - \widehat{g}^{(k)}(\eta, t)| > 2^{-l}\})}
	{1 + \displaystyle\sup_{\eta \in \NN}
		\eta(\{t \in V : |g^{(k)}(\eta, t) - \widehat{g}^{(k)}(\eta, t)| > 2^{-l}\})}.
	\]
	
	By definition, a sequence $g^{(n)} \in \BB(\NN \times V, \mathbb{C}^{\mathbb{N}})$ converges to $g$ if and only if
	for every $k \in \mathbb{N}$ and every $\varepsilon > 0$ we have
	\[
	\lim_{n \to \infty}
	\sup_{\eta \in \NN}
	\eta(\{ t \in V : |g^{(n)}_k(\eta, t) - g^{\phantom{(n)}}_k(\eta, t)| > \varepsilon\})
	= 0.
	\]
	In other words, the distance $d$ induces the topology of coordinatewise convergence in probability, uniform in $\eta \in \NN$.
	
	As above, a Borel mapping $g \in \BB(\NN \times V, \mathbb{C}^{\mathbb{N}})$ (cf. \eqref{eq:phi}) induces a map 
	$$g_* \colon \NN \to \MM_1(\mathbb{C}^{\mathbb{N}})$$
	given by the formula
	\[
	g_* \eta = (g(\eta, \cdot))_* \eta.
	\]
	
	Recall that the space $\BB_{\widetilde{\NN}}(\NN \times V, \mathbb{C}^{\mathbb{N}})$ is by definition the subspace of
	$\BB(\NN \times V, \mathbb{C}^{\mathbb{N}})$ consisting of those  maps $g$ whose corresponding induced map $g_*$  satisfies the inclusion $g_*(\NN) \subset \widetilde{\NN}$.
	
	As before, we endow the space $\BB(\NN, \widetilde{\NN})$ with the Tchebycheff uniform metric with respect to the
	L\'evy--Prokhorov metric on $\MM_1(\mathbb{C}^{\mathbb{N}})$. The subspace $\BB_{\widetilde{\NN}}(\NN \times V, \mathbb{C}^{\mathbb{N}})$
	is closed in $\BB(\NN \times V, \mathbb{C}^{\mathbb{N}})$. The special case of Theorem~\ref{thm:main} in our particular setting is the following
	
	\begin{theorem}
		\label{thm:2.3}
		The correspondence $g \mapsto g_*$ defines a uniformly continuous map from $\BB_{\widetilde{\NN}}(\NN \times V, \mathbb{C}^{\mathbb{N}})$ to $\BB(\NN, \widetilde{\NN})$.
	\end{theorem}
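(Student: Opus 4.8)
The plan is to reduce the statement to a single uniform coordinatewise estimate comparing $g$ and $\widehat{g}$ in probability, and then to convert that estimate into a L\'evy--Prokhorov bound by a standard coupling inequality. The coupling inequality I would use is the following elementary fact: if $(\Omega,\Prob)$ is a probability space and $X,Y\colon\Omega\to(W,d)$ are Borel maps with $\Prob(\{d(X,Y)>\varepsilon\})\le\varepsilon$, then $d_{\mathrm{LP}}(X_*\Prob,Y_*\Prob)\le\varepsilon$. Indeed, for a closed set $K$ one has $\{X\in K\}\subseteq\{Y\in K_\varepsilon\}\cup\{d(X,Y)>\varepsilon\}$, whence $X_*\Prob(K)\le Y_*\Prob(K_\varepsilon)+\varepsilon$, and the symmetric inequality follows identically; by the characterization of $d_{\mathrm{LP}}$ recalled above this gives $d_{\mathrm{LP}}\le\varepsilon$. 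Applying this with $\Omega=V$, $\Prob=\eta$, $X=g(\eta,\cdot)$, $Y=\widehat{g}(\eta,\cdot)$, the theorem reduces to producing, for each $\varepsilon>0$, a threshold $\delta>0$ depending on $\varepsilon$ alone such that $d(g,\widehat{g})<\delta$ forces
\[
\sup_{\eta\in\NN}\eta\bigl(\{t\in V:d(g(\eta,t),\widehat{g}(\eta,t))>\varepsilon\}\bigr)\le\varepsilon,
\]
where on the left $d$ denotes the metric on $\mathbb{C}^{\mathbb{N}}$.

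The second step dominates this ``bad set'' by finitely many coordinatewise bad sets. Fix $N$ with $2^{-N}<\varepsilon/2$ and put $\delta_0=\varepsilon/2$. If $|g^{(k)}(\eta,t)-\widehat{g}^{(k)}(\eta,t)|\le\delta_0$ for every $k\le N$, then splitting the series defining $d(g(\eta,t),\widehat{g}(\eta,t))$ into its first $N$ terms and its tail, and using monotonicity of $x\mapsto x/(1+x)$, gives $d(g(\eta,t),\widehat{g}(\eta,t))\le\delta_0/(1+\delta_0)+2^{-N}<\varepsilon$. Passing to contrapositives yields, for every $\eta\in\NN$, the inclusion
\[
\{t\in V:d(g(\eta,t),\widehat{g}(\eta,t))>\varepsilon\}\subseteq\bigcup_{k=1}^{N}\{t\in V:|g^{(k)}(\eta,t)-\widehat{g}^{(k)}(\eta,t)|>\delta_0\},
\]
so the $\eta$-measure on the left is at most $\sum_{k=1}^{N}S_k$, where $S_k:=\sup_{\eta\in\NN}\eta(\{t:|g^{(k)}(\eta,t)-\widehat{g}^{(k)}(\eta,t)|>\delta_0\})$.

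The third step controls each $S_k$ through the function-space distance. Choosing $l_0$ with $2^{-l_0}\le\delta_0$, monotonicity gives $S_k\le S_{k,l_0}$, where $S_{k,l_0}:=\sup_{\eta}\eta(\{t:|g^{(k)}-\widehat{g}^{(k)}|>2^{-l_0}\})$ is exactly the quantity entering the defining series for $d(g,\widehat{g})$ with weight $2^{-l_0-k}$. Since every individual term of that series is bounded by $d(g,\widehat{g})<\delta$, we get $2^{-l_0-k}\,S_{k,l_0}/(1+S_{k,l_0})<\delta$ for each $k\le N$; provided $\delta$ is small enough that $\delta\,2^{l_0+N}<\tfrac12$, this inverts to $S_{k,l_0}<2\delta\,2^{l_0+k}$, and hence $\sum_{k=1}^{N}S_k\le\sum_{k=1}^{N}S_{k,l_0}<\delta\,2^{\,l_0+N+2}$. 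Choosing $\delta$ (a function of $\varepsilon$ only, through $N$ and $l_0$) so that the right-hand side is $\le\varepsilon$ closes the estimate, and via the coupling inequality delivers $\sup_{\eta\in\NN}d_{\mathrm{LP}}(g_*\eta,\widehat{g}_*\eta)\le\varepsilon$ whenever $d(g,\widehat{g})<\delta$.

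I do not expect a genuine obstacle here: the scheme is quantitatively routine, and the only point demanding care is the bookkeeping that guarantees $N$, $\delta_0$, $l_0$---and thus the final $\delta$---are extracted from $\varepsilon$ alone and never from the particular pair $g,\widehat{g}$, since it is precisely this independence that upgrades pointwise-in-$\eta$ continuity to the asserted \emph{uniform} continuity. It is worth noting that the argument uses neither the compactness of $\NN$ and $\widetilde{\NN}$ nor the inclusion $g_*(\NN)\subset\widetilde{\NN}$ beyond ensuring that $g_*$ lands in $\widetilde{\NN}$, because the supremum over $\eta$ is already built into the metric on $\BB(\NN\times V,\mathbb{C}^{\mathbb{N}})$.
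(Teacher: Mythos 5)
Your proof is correct, and it takes a genuinely different route from the paper. The paper argues through finite-dimensional marginals: Lemma~\ref{lem:3.2} uses the Prokhorov Theorem and the compactness of $\widetilde{\NN}$ to show that, for measures in a fixed compact set, closeness of the first $l$ marginals (for suitable $l$ and $\delta$) forces closeness in $d_{\mathrm{LP}}$ on all of $\mathbb{C}^{\mathbb{N}}$; Theorem~\ref{thm:2.3} then follows from the observation that convergence in $\TT_{\mathrm{prob}}$ gives uniform convergence of each $\pi_l(g_n)_*$. You instead work directly on the full product space: the Ky Fan--type coupling bound $\eta(\{d(X,Y)>\varepsilon\})\le\varepsilon\Rightarrow d_{\mathrm{LP}}(X_*\eta,Y_*\eta)\le\varepsilon$ (which is exactly the paper's characterization of $d_{\mathrm{LP}}$ applied to the inclusion $\{X\in K\}\subseteq\{Y\in K_\varepsilon\}\cup\{d(X,Y)>\varepsilon\}$) converts a single uniform-in-$\eta$ convergence-in-probability estimate into the Tchebycheff--L\'evy--Prokhorov bound, and the truncation of the product metric at level $N$ with $2^{-N}<\varepsilon/2$ plus a union bound reduces that estimate to finitely many coordinatewise terms of the series defining $d(g,\widehat g)$. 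Your bookkeeping of $N$, $\delta_0$, $l_0$, $\delta$ as functions of $\varepsilon$ alone is sound, so you obtain an explicit modulus of continuity. What each approach buys: yours is more elementary and, as you correctly note, does not use the compactness of $\NN$ or $\widetilde{\NN}$ at all, so it proves a slightly stronger statement for Theorem~\ref{thm:2.3} itself; the paper's marginal-based machinery (Proposition~\ref{prop:3.1}, Corollary~\ref{cor:3.3}) is however reused later, in particular for the passage to general index sets $S$ via finite-dimensional distributions in Section~\ref{sec:4}, which your direct argument does not by itself supply.
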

	
	As we have seen, Theorem~\ref{thm:2.3} directly implies Theorem~\ref{thm:main}.
	
	\begin{corollary}
		\label{cor:2.4}
		Let Borel maps $g^{(n)} \in \BB(\NN \times V, \mathbb{C}^{\mathbb{N}})$, $n \in \mathbb{N}$, be such that the map $(g^{(n)})_*$ is
		continuous for every $n \in \mathbb{N}$ and $g^{(n)} \to g$ in $\BB(\NN \times V, \mathbb{C}^{\mathbb{N}})$,
		then $g_*$ is uniformly continuous.
	\end{corollary}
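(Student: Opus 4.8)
The plan is to read the corollary off from Theorem~\ref{thm:2.3} combined with two elementary principles: a uniform limit of continuous maps into a metric space is continuous, and a continuous map out of a compact space is uniformly continuous. By Theorem~\ref{thm:2.3} the correspondence $g\mapsto g_*$ is uniformly continuous from $\BB_{\widetilde{\NN}}(\NN\times V,\mathbb{C}^{\mathbb{N}})$ to $\BB(\NN,\widetilde{\NN})$; in particular, since $g^{(n)}\to g$ in the topology of coordinatewise convergence in probability uniform in $\eta$, I would obtain the Tchebycheff uniform convergence $\sup_{\eta\in\NN}d_{\mathrm{LP}}((g^{(n)})_*\eta,g_*\eta)\to 0$ of the pushforwards.

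To be careful about the target set I would note that the modulus of continuity furnished by (the proof of) Theorem~\ref{thm:2.3} does not depend on $\widetilde{\NN}$: it comes from the coupling bound that for any $\eta$ and any $\varepsilon>0$ one has $d_{\mathrm{LP}}((g_1)_*\eta,(g_2)_*\eta)\le\max\bigl(\varepsilon,\eta(\{w:d(g_1(\eta,w),g_2(\eta,w))>\varepsilon\})\bigr)$, obtained by testing the L\'evy--Prokhorov inequalities on a closed set $K$ and splitting the event $\{g_1(\eta,\cdot)\in K\}$ according to whether $d(g_1,g_2)\le\varepsilon$. Applying this to the pair $(g^{(n)},g)$, passing to the supremum over $\eta$, and letting first $n\to\infty$ and then $\varepsilon\to 0$ yields the uniform convergence $(g^{(n)})_*\to g_*$ directly, with no common compact $\widetilde{\NN}$ needing to be produced in advance. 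Here one uses that uniform coordinatewise convergence in probability forces convergence in probability for the product metric on $\mathbb{C}^{\mathbb{N}}$, by a routine tail estimate on the defining series (choose $N$ so that the tail $\sum_{k>N}2^{-k}$ is negligible and control the finitely many leading coordinates).

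With the uniform convergence in hand I would conclude as follows. Each $(g^{(n)})_*\colon\NN\to\MM_1(\mathbb{C}^{\mathbb{N}})$ is continuous by hypothesis, and a uniform limit of continuous maps into the metric space $(\MM_1(\mathbb{C}^{\mathbb{N}}),d_{\mathrm{LP}})$ is continuous, so $g_*$ is continuous. Since $\NN$ is compact, the continuous map $g_*$ on $\NN$ is automatically uniformly continuous, which is the assertion of the corollary.

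The one point that genuinely requires the extra hypothesis, and the main conceptual obstacle, is this: Theorem~\ref{thm:2.3} controls only how $g_*$ varies with $g$ and says nothing about the continuity of a single pushforward $g_*$ in the variable $\eta$; indeed for an arbitrary Borel $g$ the map $\eta\mapsto g_*\eta$ need not be continuous. It is precisely the assumption that the approximants $(g^{(n)})_*$ are continuous in $\eta$, together with the uniformity of the convergence $(g^{(n)})_*\to g_*$, that supplies this continuity through the uniform-limit principle. This interplay between the uniform continuity of $g\mapsto g_*$ and the continuity of $\eta\mapsto g_*\eta$ is the only place where the hypotheses are used in an essential way.
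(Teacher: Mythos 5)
Your proof is correct, and its skeleton is exactly the one the paper intends: the corollary is stated without a separate proof precisely because it is meant to be read off from Theorem~\ref{thm:2.3} (uniform continuity of $g\mapsto g_*$ gives $(g^{(n)})_*\to g_*$ in the Tchebycheff metric), followed by the two standard facts that a uniform limit of continuous maps into a metric space is continuous and that a continuous map on the compact set $\NN$ is uniformly continuous. Where you go beyond the paper is in supplying the direct coupling estimate $d_{\mathrm{LP}}\bigl((g_1)_*\eta,(g_2)_*\eta\bigr)\le\max\bigl(\varepsilon,\,\eta(\{w: d(g_1(\eta,w),g_2(\eta,w))>\varepsilon\})\bigr)$, which is verified correctly by splitting $\{g_1(\eta,\cdot)\in K\}$ along the event $\{d(g_1,g_2)\le\varepsilon\}$. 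This is a genuinely useful addition for two reasons: first, the corollary as stated works in $\BB(\NN\times V,\mathbb{C}^{\mathbb{N}})$ rather than $\BB_{\widetilde{\NN}}(\NN\times V,\mathbb{C}^{\mathbb{N}})$, so no common compact target $\widetilde{\NN}$ is given in advance and a literal invocation of Theorem~\ref{thm:2.3} would require producing one; your estimate sidesteps this entirely. Second, it replaces the paper's route to Theorem~\ref{thm:2.3} (finite-dimensional marginals, Lemma~\ref{lem:3.2}, and the Prokhorov Theorem) by an elementary two-line bound in the case relevant to the corollary, at the cost of the small but routine tail argument reducing convergence in the product metric to coordinatewise convergence in probability, which you also handle correctly. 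Your closing remark correctly isolates the one place the extra hypothesis is needed: Theorem~\ref{thm:2.3} controls the dependence on $g$ only, and the continuity of the individual approximants $(g^{(n)})_*$ in $\eta$ is what the uniform-limit principle converts into continuity of $g_*$.
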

	
	Let a subset $S_0 \subset \mathbb{N}$ and a compact subset $\NN \subset \MM_1(\mathbb{C}^{\mathbb{N}})$ satisfy
	the following condition: for every $n \in \mathbb{N}$ and every $\delta > 0, \varepsilon > 0$ there exists
	$s_0 \in S_0$ such that
	\begin{equation}
		\label{eq:cond}
		\sup_{\eta \in \NN}
		\eta(\{ t \in \mathbb{C}^{\mathbb{N}} : |t(n) - t(s_0)| > \delta \})
		< \varepsilon.
	\end{equation}
	The above argument implies the following
	\begin{proposition}
		\label{prop:2.5}
		Under condition~\eqref{eq:cond}, the projection $\mathbb{C}^{\mathbb{N}} \to \mathbb{C}^{S_0}$ induces
		a homeomorphism of the compact set $\NN$ onto its image.
	\end{proposition}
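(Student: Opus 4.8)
The plan is to deduce the statement from the general principle that a continuous bijection from a compact space onto a Hausdorff space is a homeomorphism. Accordingly, I would split the argument into two parts: the induced map on measures is continuous (so that the image of the compact set $\NN$ is compact), and it is injective on $\NN$. Write $\pi \colon \mathbb{C}^{\mathbb{N}} \to \mathbb{C}^{S_0}$ for the coordinate projection $t \mapsto (t(s))_{s \in S_0}$, and let $\pi_* \colon \MM_1(\mathbb{C}^{\mathbb{N}}) \to \MM_1(\mathbb{C}^{S_0})$ be the induced push-forward. Since $\pi$ is continuous, for any bounded continuous $F$ on $\mathbb{C}^{S_0}$ the composition $F \circ \pi$ is bounded continuous on $\mathbb{C}^{\mathbb{N}}$, so $\int F \, d\pi_*\eta = \int F\circ\pi \, d\eta$ depends weakly continuously on $\eta$; hence $\pi_*$ is continuous and $\pi_*(\NN)$ is compact. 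The whole proposition therefore reduces to the injectivity of $\pi_*$ on $\NN$.

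The main step is thus to show that if $\eta_1,\eta_2\in\NN$ satisfy $\pi_*\eta_1=\pi_*\eta_2$, then $\eta_1=\eta_2$. A Borel probability measure on $\mathbb{C}^{\mathbb{N}}$ is determined by its finite-dimensional marginals, so it suffices to prove that for every finite set of indices $n_1,\dots,n_k$ and every bounded Lipschitz $F\colon\mathbb{C}^k\to\mathbb{C}$ the integrals $\int F(t(n_1),\dots,t(n_k))\,d\eta_j$ agree for $j=1,2$. Fix such an $F$, with Lipschitz constant $L$ and bound $M$, and let $\delta,\varepsilon>0$. For each $i$ apply condition~\eqref{eq:cond} to choose $s_i\in S_0$ with $\sup_{\eta\in\NN}\eta(\{|t(n_i)-t(s_i)|>\delta\})<\varepsilon$. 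Off an exceptional event of $\eta$-probability at most $k\varepsilon$ (by a union bound) the tuples $(t(n_i))_i$ and $(t(s_i))_i$ differ coordinatewise by at most $\delta$, so replacing one tuple by the other changes $F$ by at most $L\sqrt{k}\,\delta$, while on the exceptional event the change is at most $2M$. Hence for $j=1,2$,
\[
\left| \int F(t(n_1),\dots,t(n_k))\,d\eta_j - \int F(t(s_1),\dots,t(s_k))\,d\eta_j \right| \le L\sqrt{k}\,\delta + 2Mk\varepsilon .
\]

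Because $s_1,\dots,s_k\in S_0$, the function $F(t(s_1),\dots,t(s_k))$ factors through $\pi$, so its $\eta_j$-integral equals its integral against $\pi_*\eta_j$; these coincide for $j=1,2$ by hypothesis. Combining this with the two displayed estimates, the $\eta_1$- and $\eta_2$-integrals of $F(t(n_1),\dots,t(n_k))$ differ by at most $2\bigl(L\sqrt{k}\,\delta+2Mk\varepsilon\bigr)$, and letting $\delta,\varepsilon\to0$ forces equality. This proves injectivity, and together with continuity and the compactness of $\NN$ it yields the asserted homeomorphism onto the image. The one genuinely substantive point — the rest being the standard facts that compact-to-Hausdorff continuous bijections are embeddings and that finite-dimensional marginals determine a measure on $\mathbb{C}^{\mathbb{N}}$ — is this last approximation, namely passing from closeness of the coordinates $t(n_i)$ and $t(s_i)$ \emph{in probability} to closeness of the integrals of the test function; I expect that to be the crux, and it is handled precisely by the union bound together with the Lipschitz and boundedness estimates above. (Note that the uniformity over $\eta\in\NN$ in~\eqref{eq:cond} is more than needed here: the pointwise statement for $\eta_1,\eta_2$ alone already suffices.)
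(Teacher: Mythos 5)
Your proof is correct, but it takes a different route from the one the paper intends. The paper offers no self-contained argument: Proposition~\ref{prop:2.5} is presented as a consequence of ``the above argument,'' i.e.\ of the machinery around Theorem~\ref{thm:2.3}. The intended derivation is to read condition~\eqref{eq:cond} as saying that the identity map is approximated, in the topology $\TT_{\mathrm{prob}}$ of coordinatewise convergence in probability uniform over $\NN$, by maps that factor through the coordinates in $S_0$; the continuity of $g \mapsto g_*$ then exhibits each $\eta \in \NN$ as a uniform limit of continuous functions of its projection, which gives (uniform) continuity of the inverse directly, without appealing to compactness of $\NN$. You instead bypass all of that: you prove injectivity of $\pi_*$ on $\NN$ by hand --- approximating each coordinate $t(n_i)$ by some $t(s_i)$, $s_i \in S_0$, in probability, and controlling the integral of a bounded Lipschitz test function via a union bound --- and then invoke the compact-to-Hausdorff principle. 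This is more elementary and self-contained (and your observation that only the pointwise, non-uniform form of~\eqref{eq:cond} is needed for injectivity is accurate), at the price of leaning on compactness of $\NN$ for the continuity of the inverse, whereas the paper's route would yield a uniform statement that survives without compactness. Two small points worth making explicit if you write this up: that the Borel $\sigma$-algebra of $\mathbb{C}^{\mathbb{N}}$ is generated by the finite-dimensional cylinder sets (so that marginals indeed determine the measure), and that bounded Lipschitz functions determine a Borel probability measure on $\mathbb{C}^k$; both are standard.
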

	
	\section{Proof of Theorem~\ref{thm:2.3}}
	
	Let $\NN$ and $\widetilde{\NN}$ be compact subsets of the space $\MM_1(\mathbb{C}^{\mathbb{N}})$.
	Endow the space $C(\NN, \widetilde{\NN})$ of continuous maps from $\NN$ to $\widetilde{\NN}$ with the Tchebycheff metric:
	for $h, h' \in C(\NN, \widetilde{\NN})$ the Tchebycheff uniform distance $d(h, h')$ is defined by the formula
	\[
	d(h, h') = \max_{\eta \in \NN} d_{\mathrm{LP}}\left(h(\eta), h'(\eta)\right).
	\]
	The space $C(\NN, \widetilde{\NN})$ is a complete separable metric space.
	
	To a map $h \in C(\NN, \widetilde{\NN})$ we assign its projection onto the 
	first $l$ coordinates
	\[
	\pi_l h(\kappa) = \operatorname{distr}^l_{h(\kappa)}([1, l]),
	\]
	where the symbol $\operatorname{distr}^l_\eta$ stands for the $l$-dimensional marginal distribution of a measure $\eta$ on the first $l$ coordinates.
	
	\begin{proposition}
		\label{prop:3.1}
		Let $M$ be a complete separable metric space, and let  $F \colon M \to C(\NN, \widetilde{\NN})$ be a Borel map.  If for any $l \in \mathbb{N}$ the 
		 finite-dimensional projection mapping
		\[
		m \mapsto \pi_l F(m)
		\]
		is uniformly continuous with respect to the L\'evy--Prokhorov metric, then the map $F$ is also uniformly continuous with respect to the L\'evy--Prokhorov metric.
	\end{proposition}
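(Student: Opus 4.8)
The plan is to reduce the infinite-dimensional Lévy–Prokhorov estimate to the finite-dimensional ones supplied by the hypothesis, exploiting the special structure of the metric $d$ on $\bbC^{\bbN}$. The decisive observation is that the contribution of the coordinates beyond the $L$-th to $d$ is uniformly bounded: writing $\pi_L \colon \bbC^{\bbN} \to \bbC^L$ for the coordinate projection onto the first $L$ coordinates and
\[
d_L(s, s') = \sum_{l=1}^{L} 2^{-l} \frac{|s_l - s'_l|}{1 + |s_l - s'_l|}
\]
for the truncated metric on $\bbC^L$, one has
\[
d_L(\pi_L t, \pi_L t') \le d(t, t') \le d_L(\pi_L t, \pi_L t') + 2^{-L}
\]
for all $t, t' \in \bbC^{\bbN}$, since the discarded terms sum to at most $\sum_{l > L} 2^{-l} = 2^{-L}$. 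Thus $d$ is determined by finitely many coordinates up to a tail error that can be made arbitrarily small. Note that $\pi_L F(m)(\kappa) = (\pi_L)_*\bigl(F(m)(\kappa)\bigr)$, so the finite-dimensional projection of the hypothesis is exactly the pushforward of $F(m)(\kappa)$ under this coordinate projection.

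First I would establish the central comparison lemma: for any $\mu_1, \mu_2 \in \MM_1(\bbC^{\bbN})$ and any $L$,
\[
d_{\mathrm{LP}}(\mu_1, \mu_2) \le d_{\mathrm{LP}}^{(L)}\bigl((\pi_L)_* \mu_1, (\pi_L)_* \mu_2\bigr) + 2^{-L},
\]
where $d_{\mathrm{LP}}^{(L)}$ denotes the Lévy–Prokhorov metric on $\MM_1(\bbC^L)$ associated with $d_L$. To prove it, set $\nu_i = (\pi_L)_* \mu_i$, suppose $d_{\mathrm{LP}}^{(L)}(\nu_1, \nu_2) \le \varepsilon$, and show $\mu_1(K) \le \mu_2(K_{\varepsilon + 2^{-L}}) + \varepsilon + 2^{-L}$ for every closed $K \subset \bbC^{\bbN}$, the reverse inequality following by symmetry. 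The idea is to pass to the closed set $\overline{\pi_L(K)} \subset \bbC^L$: since $K \subset \pi_L^{-1}(\pi_L(K))$, one has $\mu_1(K) \le \nu_1\bigl(\overline{\pi_L(K)}\bigr) \le \nu_2\bigl((\overline{\pi_L(K)})_\varepsilon\bigr) + \varepsilon$ by the defining property of $d_{\mathrm{LP}}^{(L)}$, and it then suffices to verify the inclusion $\pi_L^{-1}\bigl((\overline{\pi_L(K)})_\varepsilon\bigr) \subset K_{\varepsilon + 2^{-L}}$. (Alternatively, one may glue an optimal Strassen coupling of $\nu_1, \nu_2$ with the regular conditional distributions of $\mu_1, \mu_2$ along $\pi_L$; the two-sided bound above shows the glued coupling is concentrated within distance $\varepsilon + 2^{-L}$ with probability at least $1 - \varepsilon$, whence the claim by Strassen's theorem.)

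With the lemma in hand the conclusion is immediate. Given $\varepsilon > 0$, fix $L$ with $2^{-L} < \varepsilon/2$. Applying the lemma pointwise at each $\kappa \in \NN$ to $\mu_1 = F(m)(\kappa)$, $\mu_2 = F(m')(\kappa)$ and taking the maximum over $\kappa$ gives
\[
d\bigl(F(m), F(m')\bigr) \le \max_{\kappa \in \NN} d_{\mathrm{LP}}^{(L)}\bigl(\pi_L F(m)(\kappa), \pi_L F(m')(\kappa)\bigr) + 2^{-L},
\]
the maximum on the right being precisely the Tchebycheff distance between $\pi_L F(m)$ and $\pi_L F(m')$. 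By hypothesis $m \mapsto \pi_L F(m)$ is uniformly continuous, so there is $\delta > 0$ such that this Tchebycheff distance is less than $\varepsilon/2$ whenever $d_M(m, m') < \delta$; for such $m, m'$ we obtain $d(F(m), F(m')) < \varepsilon$. Since $\delta$ depends only on $\varepsilon$ and $L$, and not on $m, m'$, the map $F$ is uniformly continuous.

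I expect the main obstacle to be the comparison lemma, and specifically the verification of the inclusion $\pi_L^{-1}\bigl((\overline{\pi_L(K)})_\varepsilon\bigr) \subset K_{\varepsilon + 2^{-L}}$: given $t$ with $d_L(\pi_L t, \overline{\pi_L(K)}) \le \varepsilon$, one chooses $a \in \overline{\pi_L(K)}$ with $d_L(\pi_L t, a) \le \varepsilon$ and, for each $\eta > 0$, a point $k \in K$ with $d_L(\pi_L k, a) < \eta$, so that the right-hand tail bound yields $d(t, k) \le \varepsilon + \eta + 2^{-L}$; letting $\eta \to 0$ gives $d(t, K) \le \varepsilon + 2^{-L}$. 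Combining the tail estimate with this limiting argument over projections of points of $K$ is the only delicate point; the pointwise application, the passage to the supremum over $\kappa$, and the final $\varepsilon/2$ split are routine.
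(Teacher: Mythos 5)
Your argument is correct, but it takes a genuinely different route from the paper. The paper deduces Proposition~\ref{prop:3.1} from Lemma~\ref{lem:3.2}, a compactness statement proved via the Prokhorov Theorem: uniform tightness of the compact set $\widetilde{\NN}$ supplies, for each $\varepsilon>0$, some $l$ and $\delta>0$ such that $\delta$-closeness of the $l$-dimensional marginals of two measures \emph{in $\widetilde{\NN}$} forces $\varepsilon$-closeness of the measures themselves. You instead prove a universal quantitative comparison,
\[
d_{\mathrm{LP}}(\mu_1,\mu_2)\ \le\ d_{\mathrm{LP}}^{(L)}\bigl((\pi_L)_*\mu_1,(\pi_L)_*\mu_2\bigr)+2^{-L},
\]
valid for \emph{all} Borel probability measures on $\bbC^{\bbN}$, by exploiting the fact that the tail coordinates contribute at most $2^{-L}$ to the product metric. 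This is a legitimate trade: your lemma needs no compactness hypothesis at all and yields an explicit modulus of continuity ($L$ and $\delta$ determined directly by $\varepsilon$), whereas the paper's tightness argument is the one that survives when the metric does not have this truncated product structure; since Section~2 has already reduced everything to $\bbC^{\bbN}$ with exactly this metric, your shortcut is available here. Your identification of the delicate step --- the inclusion $\pi_L^{-1}\bigl((\overline{\pi_L(K)})_\varepsilon\bigr)\subset K_{\varepsilon+2^{-L}}$ --- is the right one, and your limiting argument handles it; the only imprecision is the claim that one can choose $a\in\overline{\pi_L(K)}$ with $d_L(\pi_L t,a)\le\varepsilon$ exactly, since the infimum over this closed (possibly unbounded, hence non-compact) set need not be attained; taking $a$ with $d_L(\pi_L t,a)\le\varepsilon+\eta$ and absorbing this into the same $\eta\to 0$ limit repairs it at no cost. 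One further point worth a sentence in a final write-up: the hypothesis speaks of uniform continuity of $m\mapsto\pi_l F(m)$ for the L\'evy--Prokhorov metric on $\MM_1(\bbC^l)$ without fixing the metric on $\bbC^l$, so you should note that on the compact set $\pi_l(\widetilde{\NN})$ the L\'evy--Prokhorov metrics built from $d_L$ and from the Euclidean metric are uniformly equivalent.
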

	
	\begin{proof}
		A  simple compactness argument  connects the distance between measures with
		the distances between their finite-dimensional distributions.
		
		\begin{lemma}
			\label{lem:3.2}
			Let $K \subset \MM_1(\mathbb{C}^{\mathbb{N}})$ be a compact set. For every $\varepsilon > 0$ there exist $l \in \mathbb{N}$
			and $\delta > 0$, depending only on $K$, such that
			if $\eta_1, \eta_2 \in K$ satisfy the inequality
			\[
			d_{\mathrm{LP}}(\operatorname{distr}^l_{\eta_1}([1, l]), \operatorname{distr}^l_{\eta_2}([1, l])) \le \delta,
			\]
			then  we have $$d_{\mathrm{LP}}(\eta_1, \eta_2) \le \varepsilon.$$
		\end{lemma}
		
		Lemma~\ref{lem:3.2} follows  from the Prokhorov Theorem and  directly implies Proposition \ref{prop:3.1}.
		\end{proof}

	Let $\pi_l$ denote the projection $\pi_l \colon \mathbb{C}^{\mathbb{N}} \to \mathbb{C}^l$ onto the first $l$ coordinates.
	Slightly abusing notation, we use the same symbol for the corresponding projection
	$\pi_l \colon \MM_1(\mathbb{C}^{\mathbb{N}}) \to \MM_1(\mathbb{C}^l)$.
	
	Proposition~\ref{prop:3.1} implies the following simple
	
	\begin{corollary}
		\label{cor:3.3}
		Let $\NN, \widetilde{\NN} \subset \MM_1(\mathbb{C}^{\mathbb{N}})$ be compact sets and let $h_n \colon \NN \to \widetilde{\NN}$,
		$h \colon \NN \to \widetilde{\NN}$ be Borel maps. The sequence $h_n$ converges to $h$ uniformly
		if and only if for every $l \in \mathbb{N}$ the sequence $\pi_l h_n$ converges uniformly to $\pi_l h$
		as $n \to \infty$.
	\end{corollary}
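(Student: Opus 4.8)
The plan is to deduce both implications from the single-measure compactness estimate of Lemma~\ref{lem:3.2}, applied with $K = \widetilde{\NN}$; the forward implication is routine, while the reverse is where the uniformity furnished by compactness is essential.

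For the forward implication, I would first observe that $\pi_l \colon \MM_1(\mathbb{C}^{\mathbb{N}}) \to \MM_1(\mathbb{C}^l)$ is the pushforward under the coordinate map $P_l \colon \mathbb{C}^{\mathbb{N}} \to \mathbb{C}^l$, $t \mapsto (t_1,\dots,t_l)$. Equipping $\mathbb{C}^l$ with the truncated metric $d_l(s,s') = \sum_{k=1}^l 2^{-k}\tfrac{|s_k-s'_k|}{1+|s_k-s'_k|}$, the map $P_l$ is $1$-Lipschitz, since $d_l(P_l t, P_l t')$ is obtained from $d(t,t')$ by discarding nonnegative summands. A standard computation then shows that pushforward under a $1$-Lipschitz map is non-expansive for the corresponding Lévy--Prokhorov metrics: if $K \subset \mathbb{C}^l$ is closed, then $P_l^{-1}(K)$ is closed and $(P_l^{-1}(K))_\varepsilon \subset P_l^{-1}(K_\varepsilon)$, whence $d_{\mathrm{LP}}(\pi_l\mu,\pi_l\nu) \le d_{\mathrm{LP}}(\mu,\nu)$ for all $\mu,\nu$. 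Consequently $\sup_{\eta}d_{\mathrm{LP}}(\pi_l h_n(\eta),\pi_l h(\eta)) \le \sup_{\eta}d_{\mathrm{LP}}(h_n(\eta),h(\eta))$ for every $l$, so uniform convergence $h_n \to h$ forces uniform convergence $\pi_l h_n \to \pi_l h$.

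For the reverse implication, I would fix $\varepsilon>0$ and apply Lemma~\ref{lem:3.2} to the compact set $\widetilde{\NN}$ to obtain $l \in \mathbb{N}$ and $\delta>0$, depending only on $\widetilde{\NN}$, such that any two measures of $\widetilde{\NN}$ whose $l$-dimensional marginals lie within $\delta$ in $d_{\mathrm{LP}}$ are themselves within $\varepsilon$. By hypothesis $\pi_l h_n \to \pi_l h$ uniformly, so there is $N$ with $\sup_{\eta}d_{\mathrm{LP}}(\pi_l h_n(\eta),\pi_l h(\eta)) \le \delta$ for all $n \ge N$. Since $h_n(\eta), h(\eta) \in \widetilde{\NN}$ for every $\eta \in \NN$, the conclusion of Lemma~\ref{lem:3.2} upgrades this marginal estimate to $d_{\mathrm{LP}}(h_n(\eta),h(\eta)) \le \varepsilon$ for every $\eta$ and every $n \ge N$. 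Taking the supremum over $\eta \in \NN$ yields $\sup_{\eta}d_{\mathrm{LP}}(h_n(\eta),h(\eta)) \le \varepsilon$ for $n \ge N$, that is, $h_n \to h$ uniformly.

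The only substantive point is the reverse direction, and the heart of the matter is that Lemma~\ref{lem:3.2} provides a pair $(l,\delta)$ valid simultaneously for all measures in $\widetilde{\NN}$; this uniformity over the compact set (ultimately a consequence of the Prokhorov Theorem, as in the proof of Proposition~\ref{prop:3.1}) is precisely what allows the passage from uniform control of finitely many marginals to uniform control of the full measures. I do not anticipate difficulties beyond correctly invoking this uniform estimate, since the remaining inequalities are either definitional (non-expansiveness of $\pi_l$) or supplied directly by Lemma~\ref{lem:3.2}.
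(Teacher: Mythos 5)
Your proposal is correct and follows essentially the same route as the paper: the paper derives Corollary~\ref{cor:3.3} from Proposition~\ref{prop:3.1}, whose entire content is Lemma~\ref{lem:3.2}, and you invoke that lemma directly with $K=\widetilde{\NN}$ for the nontrivial implication while supplying the routine non-expansiveness of $\pi_l$ for the forward one. No gaps.
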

	
	We now conclude the proof of Theorem~\ref{thm:2.3}. As before, let $\NN \subset \MM_1(\mathbb{C}^{\mathbb{N}})$
	be a compact subset, and let $V \subset \mathbb{C}^{\mathbb{N}}$ be a Borel subset satisfying $\eta(V) = 1$ for every $\eta \in \NN$.
	Next, let a sequence  of Borel mappings $g_n \in \BB(\NN \times V, \mathbb{C}^{\mathbb{N}})$ converge, under the topology $\TT_{\mathrm{prob}}$, to the limit $g \in \BB(\NN \times V, \mathbb{C}^{\mathbb{N}})$.
	But then, by definition, for every $l \in \mathbb{N}$ the uniform convergence
	\[
	\pi_l(g_n)_* \to \pi_l g_* \quad \text{as } n \to \infty
	\]
	holds in the space $\BB(\pi_l(\NN), \MM_1(\mathbb{C}^l))$, endowed with the Tchebycheff metric. Corollary \ref{cor:3.3} now implies Theorem~\ref{thm:2.3}.
	Theorem~\ref{thm:2.3} is proved completely. \qed
	
	\section{A sufficient condition for separability of the space of realizations}
	\label{sec:4}
	
	Let $S$ be an index set; here we mainly need the case when S is a complete separable metric space. Let $\ell_1^+, \widetilde{\ell}_1^+$ be positive continuous functions on $S$ and let $\ell_2^+, \widetilde{\ell}_2^+$
	be nonnegative continuous  functions on $S \times S$ such that $\ell_2^+(s, s) = \widetilde{\ell}_2^+(s, s) = 0$
	for all $s \in S$. To the pair of functions $\ell_1^+, \ell_2^+$ 
	we assign the family $\MM_2(\ell_1^+, \ell_2^+)$ of measures satisfying the following assumptions:
	\begin{enumerate}
		\item For all $s \in S$ we have $t(s) \in L^2(\mathbb{C}^S, \eta)$ and $\|t(s)\|_{L^2(C^S, \eta)} \le \ell_1^+(s)$;
		\item For all $s_1, s_2 \in S$ we have $\|t(s_1) - t(s_2)\|_{L^2(\mathbb{C}^S, \eta)} \le \ell_2^+(s_1, s_2)$.
	\end{enumerate}
	
	Let $V \subset {\mathbb C}^S$ be a Borel subset such that $\eta(V) = 1$ for all $\eta \in \MM_2(\ell_1^+, \ell_2^+)$. Denote
	\[
	\BB(\ell_1^+, \ell_2^+; \widetilde{\ell}_1^+, \widetilde{\ell}_2^+) =
	\BB_{\MM_2(\widetilde{\ell}_1^+, \widetilde{\ell}_2^+)}(\MM_2(\ell_1^+, \ell_2^+) \times V, {\mathbb C}^S).
	\]
	Let $S_0 = \{s_1, s_2, \ldots\} \subset S$ be a countable subset. We endow the space $\BB(\ell_1^+, \ell_2^+; \widetilde{\ell}_1^+, \widetilde{\ell}_2^+)$
	with a slightly stronger topology $\TT_2$ induced by the distance
	\[
	d_2(g, \widehat{g}) =
	\sum_{s_\ell \in S_0}
	2^{-\ell}
	\frac{\displaystyle\sup_{\eta \in \MM_2(\ell_1^+, \ell_2^+)}
		\|g(\eta, t)(s_\ell) - \widehat{g}(\eta, t)(s_\ell)\|_{L^2(\mathbb{C}^S, \eta)}}
	{1 + \displaystyle\sup_{\eta \in \MM_2(\ell_1^+, \ell_2^+)}
		\|g(\eta, t)(s_\ell) - \widehat{g}(\eta, t)(s_\ell)\|_{L^2(\mathbb{C}^S, \eta)}}.
	\]
	
	Theorem~\ref{thm:2.3} yields the following
	\begin{corollary}
		\label{cor:4.1}
		The correspondence $g \mapsto g_*$ induces a uniformly continuous map from the metric space
		$\BB(\ell_1^+, \ell_2^+; \widetilde{\ell}_1^+, \widetilde{\ell}_2^+)$ to the space $\BB(\MM_2(\ell_1^+, \ell_2^+), \MM_2(\widetilde{\ell}_1^+, \widetilde{\ell}_2^+))$ endowed with the Tchebycheff uniform metric.
	\end{corollary}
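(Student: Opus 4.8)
The plan is to deduce this corollary from Theorem~\ref{thm:2.3} by passing to a countable dense index set and by comparing the $L^2$-based topology $\TT_2$ with the in-probability topology $\TT_{\mathrm{prob}}$. Since $S$ is separable, I may and do take $S_0 = \{s_1, s_2, \ldots\}$ to be dense in $S$. Write $\pi \colon \bbC^S \to \bbC^{S_0}$ for the coordinate projection and identify $\bbC^{S_0}$ with $\bbC^{\bbN}$. Two elementary comparisons drive the argument: first, Chebyshev's inequality
\[
\eta(\{ t : |g(\eta,t)(s) - \widehat g(\eta,t)(s)| > \delta \}) \le \delta^{-2}\, \|g(\eta,t)(s) - \widehat g(\eta,t)(s)\|_{L^2(\bbC^S,\eta)}^2,
\]
which shows that closeness in $d_2$ forces closeness in $\TT_{\mathrm{prob}}$ of the $S_0$-coordinates; and second, the projection $\pi$, which I will show induces a homeomorphism of the relevant compact sets onto their images in $\MM_1(\bbC^{\bbN})$.

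First I would establish that $\NN := \MM_2(\ell_1^+,\ell_2^+)$ and $\widetilde\NN := \MM_2(\widetilde\ell_1^+,\widetilde\ell_2^+)$ are compact. Tightness is immediate: by assumption~(1) and Chebyshev, for each $k$ one may choose $R_k$ with $\sup_{\eta\in\NN}\eta(\{|t(s_k)| > R_k\}) \le (\ell_1^+(s_k)/R_k)^2 < \varepsilon 2^{-k}$, so that the product $\prod_k \{|z|\le R_k\}$ is a compact subset of $\bbC^{\bbN}$ of $\eta$-measure at least $1-\varepsilon$ for every $\eta$. Closedness under weak limits follows from the Portmanteau lower semicontinuity of the maps $\eta \mapsto \int |t(s)|^2\,d\eta$ and $\eta\mapsto \int|t(s_1)-t(s_2)|^2\,d\eta$ along continuous nonnegative integrands: both defining inequalities~(1) and~(2) are preserved in the weak limit. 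By Prokhorov's theorem $\NN$ and $\widetilde\NN$ are therefore compact, and the same holds for their images $\pi_*\NN$ and $\pi_*\widetilde\NN$.

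Next I would verify condition~\eqref{eq:cond} for the projection $\bbC^S \to \bbC^{S_0}$. Given $s \in S$ and $\delta,\varepsilon>0$, the density of $S_0$ together with the continuity of $\ell_2^+$ (and $\ell_2^+(s,s)=0$) furnishes $s_0\in S_0$ with $\ell_2^+(s,s_0) < \delta\sqrt{\varepsilon}$; assumption~(2) and Chebyshev then give $\sup_{\eta\in\NN}\eta(\{|t(s)-t(s_0)|>\delta\}) \le \ell_2^+(s,s_0)^2/\delta^2 < \varepsilon$, which is exactly~\eqref{eq:cond}. The identical computation applies to $\widetilde\NN$ using $\widetilde\ell_2^+$. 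Proposition~\ref{prop:2.5} now shows that $\pi$ restricts to a homeomorphism of each of $\NN$, $\widetilde\NN$ onto its image. As these sets are compact, both $\pi_*$ and its inverse are uniformly continuous there; in particular the Tchebycheff uniform metric on $\BB(\NN,\widetilde\NN)$ with respect to $d_{\mathrm{LP}}$ is uniformly equivalent to the corresponding metric on the $S_0$-marginals.

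Finally I would assemble the pieces. The Chebyshev comparison of the first paragraph shows that the identity map from $\BB(\ell_1^+, \ell_2^+; \widetilde{\ell}_1^+, \widetilde{\ell}_2^+)$ equipped with $\TT_2$ to the same space of maps equipped with the $\TT_{\mathrm{prob}}$-metric on the $S_0$-coordinates is uniformly continuous: given the finitely many coordinates and thresholds that determine the $\TT_{\mathrm{prob}}$-metric up to $\varepsilon$, a sufficiently small $d_2$-ball controls each of the corresponding $L^2$-norms, hence each of the corresponding in-probability quantities. It therefore suffices to prove uniform continuity of $g\mapsto g_*$ with respect to $\TT_{\mathrm{prob}}$ on the $S_0$-coordinates, which is precisely Theorem~\ref{thm:2.3} applied with $\bbC^{\bbN}=\bbC^{S_0}$ to the maps $\pi\circ g$: one has $(\pi\circ g)_* = \pi_* g_*$, and composing with the uniformly continuous inverse of the homeomorphism $\pi_*|_{\widetilde\NN}$ recovers $g_*$. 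This yields the asserted uniform continuity. The main obstacle is the passage between $\bbC^S$ and the countable subspace $\bbC^{S_0}$: one must check both that weak limits remain in $\MM_2$ and that the $L^2$-modulus of continuity encoded by $\ell_2^+$ renders the restriction to $S_0$ faithful, so that \eqref{eq:cond} and hence Proposition~\ref{prop:2.5} apply; the two Chebyshev estimates are then routine.
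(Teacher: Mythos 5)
Your argument is correct and follows exactly the route the paper intends when it says that Theorem~\ref{thm:2.3} ``yields'' Corollary~\ref{cor:4.1}: compactness of the $\MM_2$-classes, reduction to the countable dense set $S_0$ via the $\ell_2^+$-modulus, the Chebyshev comparison of $d_2$ with $\TT_{\mathrm{prob}}$, and then Theorem~\ref{thm:2.3} over $\bbC^{S_0}\cong\bbC^{\bbN}$. The only adjustment I would make is that for uncountable $S$ the homeomorphism of $\MM_2(\ell_1^+,\ell_2^+)$ onto its $S_0$-marginal image is Proposition~\ref{prop:4.3} (which needs the Bockstein theorem in addition to your verification of condition~\eqref{eq:cond}), rather than Proposition~\ref{prop:2.5} itself, which is stated only for $\NN\subset\MM_1(\bbC^{\bbN})$.
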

	
	From now on we let $S$ be a complete separable metric space. Endow the space
	\[
	{\mathbb C}^S = \{ t : S \to \mathbb{C} \}
	\]
	of $\mathbb{C}$-valued functions on $S$ with the Tychonoff product topology.
	
	Let $\MM_1(\mathbb{C}^S)$ be the space of Borel probability measures on 
	$\mathbb{C}^S$ endowed with the weak topology.
	Given distinct points $s_1, \ldots, s_l \in S$ the corresponding finite-dimensional distribution of a measure $\eta \in \MM_1(C^S)$ is by definition the image of $\eta$ under the projection
	\[
	t \mapsto (t(s_1), \ldots, t(s_l)).
	\]
	The symbol $\operatorname{distr}^l_\eta(s_1, \ldots, s_l)$ stands for the $l$-dimensional distribution
	of $\eta$ corresponding to the distinct points $s_1, \ldots, s_l \in S$.
	We start with a simple observation.
	
	\begin{proposition}
		\label{prop:4.2}
		A sequence of measures $\eta_n \in \MM_1(\mathbb{C}^S)$ converges weakly to a limit measure $\eta \in \MM_1(\mathbb{C}^S)$ if and only if
		for every $l \in \mathbb{N}$ and every finite collection of distinct coordinates $s_1, \ldots, s_l \in S$ the corresponding
		finite-dimensional distributions converge weakly as $n \to \infty$, that is, if we have 
		\[
		\lim\limits_{n\to\infty}\operatorname{distr}^l_{\eta_n}(s_1, \ldots, s_l)= \operatorname{distr}^l_{\eta}(s_1, \ldots, s_l).
		\]
	\end{proposition}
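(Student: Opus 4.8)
The plan is to treat the two implications separately; the forward one is immediate and the reverse one carries all of the content. For the ``only if'' direction I would observe that for any distinct $s_1,\ldots,s_l\in S$ the projection $\pi_{s_1,\ldots,s_l}\colon\mathbb{C}^S\to\mathbb{C}^l$, $t\mapsto(t(s_1),\ldots,t(s_l))$, is continuous for the Tychonoff topology, essentially by definition of the product topology. Since push-forward under a continuous map preserves weak convergence — for a bounded continuous $\psi$ on $\mathbb{C}^l$ the composition $\psi\circ\pi_{s_1,\ldots,s_l}$ is bounded and continuous on $\mathbb{C}^S$, so $\int\psi\,d(\pi_{s_1,\ldots,s_l})_*\eta_n=\int\psi\circ\pi_{s_1,\ldots,s_l}\,d\eta_n\to\int\psi\circ\pi_{s_1,\ldots,s_l}\,d\eta$ — the finite-dimensional distributions $\operatorname{distr}^l_{\eta_n}(s_1,\ldots,s_l)=(\pi_{s_1,\ldots,s_l})_*\eta_n$ converge weakly to $\operatorname{distr}^l_\eta(s_1,\ldots,s_l)$.

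For the ``if'' direction I would fix a bounded continuous $\varphi\colon\mathbb{C}^S\to\mathbb{C}$ and aim to show $\int\varphi\,d\eta_n\to\int\varphi\,d\eta$. The decisive reduction, needed because $S$ may be uncountable and $\mathbb{C}^S$ is then non-metrizable, is the classical fact that a continuous function on a product of separable metric spaces depends on only countably many coordinates: there are a countable $B=\{s_i\}_{i\in\mathbb{N}}\subseteq S$ and a bounded continuous $\psi\colon\mathbb{C}^B\to\mathbb{C}$ with $\varphi=\psi\circ\pi_B$, where $\pi_B\colon\mathbb{C}^S\to\mathbb{C}^B$ is the coordinate projection. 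Then $\int\varphi\,d\eta_n=\int\psi\,d(\pi_B)_*\eta_n$, so it suffices to prove $(\pi_B)_*\eta_n\to(\pi_B)_*\eta$ weakly on the Polish space $\mathbb{C}^B\cong\mathbb{C}^{\mathbb{N}}$. By hypothesis every finite-dimensional distribution of $(\pi_B)_*\eta_n$, being some $\operatorname{distr}^l_{\eta_n}(s_{i_1},\ldots,s_{i_l})$, converges weakly to the corresponding one of $(\pi_B)_*\eta$.

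It then remains to establish the clean probabilistic statement that on $\mathbb{C}^{\mathbb{N}}$ weak convergence is equivalent to convergence of all finite-dimensional distributions. Here I would argue as in the proof of Lemma~\ref{lem:3.2}: a countable product of closed disks $\prod_i\{|z|\le a_i\}$ is compact in $\mathbb{C}^{\mathbb{N}}$ by Tychonoff's theorem, and since the weak convergence of each one-dimensional marginal makes the family $\{(\pi_{s_i})_*\eta_n\}_n$ tight, one may choose radii $a_i$ with $\sup_n(\pi_{s_i})_*\eta_n(\{|z|>a_i\})<\varepsilon 2^{-i}$, whence $\inf_n(\pi_B)_*\eta_n\bigl(\prod_i\{|z|\le a_i\}\bigr)\ge 1-\varepsilon$; thus $\{(\pi_B)_*\eta_n\}$ is tight. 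Prokhorov's theorem then gives relative compactness in the weak topology; every weak limit point has the prescribed finite-dimensional distributions, and since these determine a Borel measure on $\mathbb{C}^{\mathbb{N}}$ (the cylinder sets form a generating $\pi$-system, so Dynkin's lemma applies), the limit point is unique and equals $(\pi_B)_*\eta$. Hence the whole sequence converges, which completes the reverse implication.

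I expect the main obstacle to be precisely the countable-coordinate reduction in the second paragraph: it is the step that legitimately replaces the non-metrizable space $\mathbb{C}^S$ by the Polish space $\mathbb{C}^{\mathbb{N}}$, on which tightness, Prokhorov's theorem, and the $\pi$-system uniqueness argument are all available. The remaining ingredients are standard, and for countable $S$ the first two paragraphs collapse and only the third is required.
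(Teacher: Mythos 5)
Your proposal is correct and follows essentially the same route as the paper: the reduction to countably many coordinates (the paper cites this as the Bockstein theorem), followed by tightness via a countable product of disks, Prokhorov's theorem, and identification of the limit through its finite-dimensional distributions. You additionally spell out the uniqueness-of-limit-point step via the $\pi$-system argument, which the paper leaves implicit.
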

	
	\begin{proof}
		The Bockstein Theorem \cite{Bockstein} states that any bounded continuous function $f$ on $\mathbb{C}^S$
		only depends on a countable subcollection of coordinates:
		\[
		f(t(s)) = f(t(s_1), t(s_2), \ldots, t(s_\ell), \ldots).
		\]
		It is therefore sufficient to establish the proposition for $S = \mathbb{N}$.
		
		We now need to show that a subset $\NN_0 \subset \MM_1(\mathbb{C}^{\mathbb{N}})$ is precompact if for every $l \in \mathbb{N}$
		the set
		\[
		\NN_{0,l} = \{\operatorname{distr}^l_{\eta}(1, \ldots, l) : \eta \in \NN_0\}
		\]
		of the finite-dimensional projections of $\NN_0$ on the first $l$ coordinates is precompact.
		
		Indeed, the Prokhorov Theorem implies that for every $l \in \mathbb{N}$ there exists a compact set $K_l \subset \mathbb{C}^l$
		such that for every measure $\eta \in \NN_0$ we have
		\[
		\operatorname{distr}^l_{\eta}(1, \ldots, l)(K_l) \ge 1 - \varepsilon/2^l.
		\]
		The compact set $K_l$ is bounded in the $l$-th coordinate:
		\[
		K_l \subset \{(t_1, \ldots, t_l) : |t_l| \le R_l\}
		\]
		for some $R_l > 0$. We then have
		\[
		\eta(\{t : |t(l)| > R_l\}) < \varepsilon/2^l.
		\]
		The compact set
		\[
		K = \{ t : |t(1)| \le R_1, \ldots, |t(l)| \le R_l, \ldots \}
		\]
		then satisfies the bound
		\[
		\eta(K) \ge 1 - \sum_{l=1}^{\infty} \eta(\{ t : |t(l)| > R_l \}) \ge 1 - \varepsilon.
		\]
		and the set $\NN_0$ is precompact.
	\end{proof}
	
	By definition, the subset $\MM_2(\ell_1^+, \ell_2^+)$ is closed in $C^S$ with respect to the weak topology. Let $S_0 \subset S$ be a countable dense subset.
	
	\begin{proposition}
		\label{prop:4.3}
		Let $S_0 \subset S$ be countable and dense. The projection map $\mathbb{C}^S \to \mathbb{C}^{S_0}$ induces a homeomorphism
		of the space $\MM_2(\ell_1^+, \ell_2^+)$ onto its image.
	\end{proposition}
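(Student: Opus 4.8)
The plan is to verify that the family $\MM_2(\ell_1^+,\ell_2^+)$ together with the countable set $S_0$ satisfies a uniform approximability condition of the type \eqref{eq:cond}, and then to run the argument of Proposition~\ref{prop:2.5} in the present continuous-index setting, the passage from arbitrary finite collections of coordinates to the countable family $S_0$ being controlled by Proposition~\ref{prop:4.2}. Since the projection $P\colon\bbC^S\to\bbC^{S_0}$ is continuous for the product topologies, the induced pushforward $P_*\colon\MM_1(\bbC^S)\to\MM_1(\bbC^{S_0})$ is continuous for the weak topologies; restricting to $\MM_2(\ell_1^+,\ell_2^+)$, continuity is automatic, so the real content is the injectivity of $P_*$ on $\MM_2(\ell_1^+,\ell_2^+)$ together with the continuity of the inverse on the image.

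The engine of the proof is a single uniform estimate. For $\eta\in\MM_2(\ell_1^+,\ell_2^+)$, $s\in S$, $s_0\in S_0$ and $\delta>0$, Chebyshev's inequality applied to the second assumption defining $\MM_2(\ell_1^+,\ell_2^+)$ gives
\[
\eta\bigl(\{t:|t(s)-t(s_0)|>\delta\}\bigr)\le \delta^{-2}\,\|t(s)-t(s_0)\|_{L^2(\bbC^S,\eta)}^2\le \delta^{-2}\,\bigl(\ell_2^+(s,s_0)\bigr)^2 .
\]
The right-hand side does not depend on $\eta$. Since $\ell_2^+$ is continuous with $\ell_2^+(s,s)=0$ and $S_0$ is dense in $S$, for any $s\in S$ and any $\delta,\varepsilon>0$ one may pick $s_0\in S_0$ with $\ell_2^+(s,s_0)<\delta\sqrt{\varepsilon}$, whence
\[
\sup_{\eta\in\MM_2(\ell_1^+,\ell_2^+)}\eta\bigl(\{t:|t(s)-t(s_0)|>\delta\}\bigr)<\varepsilon .
\]
This is exactly the analogue of condition~\eqref{eq:cond} for the family $\MM_2(\ell_1^+,\ell_2^+)$ and the dense set $S_0$; equivalently, the coordinate $t(s)$ is approximated in $L^2(\eta)$, \emph{uniformly} over $\eta\in\MM_2(\ell_1^+,\ell_2^+)$, by coordinates indexed in $S_0$.

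From this uniform approximation both required properties follow through Proposition~\ref{prop:4.2}. For injectivity, suppose $\eta,\eta'\in\MM_2(\ell_1^+,\ell_2^+)$ have the same image under $P_*$, i.e.\ the same finite-dimensional distributions on coordinates from $S_0$. Given distinct $s_1,\dots,s_l\in S$, pick sequences $S_0\ni s_0^{(j,k)}\to s_j$ as $k\to\infty$; then $\ell_2^+(s_j,s_0^{(j,k)})\to0$, so $t(s_0^{(j,k)})\to t(s_j)$ in $L^2$ and hence in distribution, and $\operatorname{distr}^l_{\eta}(s_1,\dots,s_l)$ is the weak limit as $k\to\infty$ of the $S_0$-marginals $\operatorname{distr}^l_{\eta}(s_0^{(1,k)},\dots,s_0^{(l,k)})$, and likewise for $\eta'$. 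Since these $S_0$-marginals coincide for every $k$, so do the $(s_1,\dots,s_l)$-marginals, and, finite-dimensional distributions determining the measure, Proposition~\ref{prop:4.2} forces $\eta=\eta'$. For continuity of the inverse, if $P_*\eta_n\to P_*\eta$ with $\eta_n,\eta\in\MM_2(\ell_1^+,\ell_2^+)$, fix $s_1,\dots,s_l$ and approximate each by $s_0^{(j)}\in S_0$; bounding the L\'evy--Prokhorov distance between $\operatorname{distr}^l_{\eta_n}(s_1,\dots,s_l)$ and $\operatorname{distr}^l_{\eta}(s_1,\dots,s_l)$ by the triangle inequality through the $S_0$-marginals, the two outer terms are small uniformly in $n$ by the estimate above while the middle term tends to $0$ by hypothesis, so all finite-dimensional distributions of $\eta_n$ converge to those of $\eta$ and $\eta_n\to\eta$ by Proposition~\ref{prop:4.2}. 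As the image $P_*\MM_2(\ell_1^+,\ell_2^+)$ lies in the metrizable space $\MM_1(\bbC^{S_0})$, these sequential arguments suffice.

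The main obstacle is precisely the continuity of the inverse, and within it the requirement that the approximation of an arbitrary coordinate $t(s)$ by coordinates $t(s_0)$, $s_0\in S_0$, be uniform over the entire family $\MM_2(\ell_1^+,\ell_2^+)$: without such uniformity the two outer terms in the triangle-inequality estimate could not be controlled simultaneously for all $n$. This uniformity is exactly what the second defining assumption of $\MM_2(\ell_1^+,\ell_2^+)$ supplies, the bound $\ell_2^+(s,s_0)$ being independent of the measure. The only further point to keep in mind is of a soft topological nature: weak convergence and the weak topology on $\MM_1(\bbC^S)$ must be handled through the finite-dimensional characterization of Proposition~\ref{prop:4.2} rather than directly, since $\bbC^S$ carries the non-metrizable product topology when $S$ is uncountable.
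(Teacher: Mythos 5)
Your proof is correct, and it follows a somewhat different route from the paper's. The paper disposes of continuity and injectivity in one line each and concentrates on showing that the projection is an \emph{open} map: it reduces openness of the image of a subbasic weak neighbourhood $U(\varepsilon,f,\eta_0)$ to the case of countable $S$ via the Bockstein theorem, and then invokes Proposition~\ref{prop:2.5}, whose hypothesis~\eqref{eq:cond} is left for the reader to verify for the family $\MM_2(\ell_1^+,\ell_2^+)$. You instead prove continuity of the inverse directly by a sequential argument: the image lies in the metrizable space $\MM_1(\bbC^{S_0})$, so sequential continuity suffices, and convergence of all finite-dimensional distributions is obtained from the triangle inequality through the $S_0$-marginals together with your uniform Chebyshev bound $\eta(\{|t(s)-t(s_0)|>\delta\})\le\delta^{-2}\bigl(\ell_2^+(s,s_0)\bigr)^2$; Proposition~\ref{prop:4.2} then upgrades convergence of finite-dimensional distributions to weak convergence. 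That Chebyshev estimate is precisely the verification of the analogue of condition~\eqref{eq:cond} which the paper's appeal to Proposition~\ref{prop:2.5} implicitly requires, so you have supplied a detail the paper omits. What the paper's route buys is a genuinely topological (non-sequential) openness statement inherited uniformly from the countable case; what yours buys is explicitness, since the quantitative source of the uniformity over $\MM_2(\ell_1^+,\ell_2^+)$ is displayed --- though note that Proposition~\ref{prop:4.2}, on which you lean, itself rests on the Bockstein theorem, so the reduction to countably many coordinates is not avoided, only repackaged.
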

	
	\begin{proof}
		The projection mapping $\mathbb C^S \to \mathbb C^{S_0}$ is continuous by definition. Next, one directly verifies that our natural projection is injective in restriction to the subset $\MM_2(l_1^+, l_2^+)$. It remains to show that our natural projection map is open.
		
		Fix $\varepsilon > 0$ and let $f \colon \mathbb{C}^S \to \mathbb C$ be continuous and bounded.
		Introduce the neighbourhood $U(\varepsilon, f, \eta_0)$ of a measure $\eta_0 \in \MM_1(\mathbb{C}^S)$ by the formula
		\[
		U(\varepsilon, f, \eta_0) = \left\{ \eta \in \MM_1(C^S) :
		\left| \int f \, d\eta - \int f \, d\eta_0 \right| < \varepsilon \right\}.
		\]
		The family of neighbourhoods $U(\varepsilon, f, \eta_0)$, taken over all $\varepsilon > 0$ and all
		bounded continuous functions $f \colon C^S \to \mathbb{C}$, forms a subbasis of the weak topology on $\MM_1(\mathbb{C}^S)$.
In order to establish Proposition \ref{prop:4.3} it suffices therefore to prove that the image of any neighbourhood of the form $U(\varepsilon, f, \eta_0)$
is open under the natural projection  mapping $\mathbb C^S \to \mathbb C^{S_0}$ . By the Bockstein Theorem \cite{Bockstein} the function $f$ depends only
		on a countable subcollection of coordinates. Therefore,  Proposition \ref{prop:4.3} for general $S$ follows from the particular case of
		countable $S$ --- which  is proved in Proposition~\ref{prop:2.5}. Proposition \ref{prop:4.3} is proved completely.
	\end{proof}
	
	An important particular case of Theorem \ref{thm:main} arises when the compact set $\NN$ is defined by bounds on the exponential moments of realizations of a stochastic process and the mapping $g$ is the normalized exponential: that is, we take $\gamma \in \mathbb{R}$ or, more generally, $\gamma \in \mathbb{C}$ , let
 $(X_s)_{s \in S}$ be a stochastic process on our space $S$, and consider the correspondence
	\[
	g \colon X_s \mapsto \frac{\exp(\gamma X_s)}{\mathbb{E}_\eta \exp(\gamma X_s)}.
	\]
	
	There is an extensive literature devoted to Gaussian Multiplicative Chaos, both real and complex; see e.g. \cite{Berestycki2,Bufetov1,Bufetov2,Bufetov3,Chhaibi,FyodorovBouchaud,FyodorovKeating,FyodorovKhoruzhenkoSimm,Kahane,LacoinGMC,LacoinRRV,LacoinUniv,LambertOstrovskySimm,NajnudelEtAl,NikulaSaksmanWebb,RhodesVargas,Shamov,WebbCUE}, as well as the reviews \cite{BerestyckiPowell,RhodesVargas}. Applications of Theorem~\ref{thm:main} to the Gaussian Multiplicative Chaos will be considered in the sequel to this note.

\end{document}